\newtheorem{theorem}{Theorem}[section]
\newtheorem{lemma}[theorem]{Lemma}
\newtheorem{proposition}[theorem]{Proposition}
\theoremstyle{definition}
\newtheorem{example}[theorem]{Example}
\newtheorem{remark}[theorem]{Remark}
\theoremstyle{remark}
\title{Degree estimate for subalgebras}
\author{yun-chang li and jie-tai yu}
\address{Department of Mathematics,
The University of Hong Kong, Hong Kong SAR, China} \email{liyunch@hku.hk,\ liyunch@163.com}
\address{Department of Mathematics, The University of Hong
Kong, Hong Kong SAR, China} \email{yujt@hkusua.hku.hk,\
yujietai@yahoo.com}
\thanks{The research of Jie-Tai Yu was partially
supported by an RGC-CERG Grant.}
\thanks{The research of Yun-Chang Li was partially supported by
a Postgraduate Studentship.}
\subjclass[2000] {Primary 13S10, 16S10. Secondary 13F20, 13W20,
14R10, 16W20, 16Z05.}
\begin{document}

\begin{abstract}
Based on Bergman's Lemma on centralizers, we obtain a sharp lower degree bound for nonconstant elements in a
subalgebra generated by two elements of a free associative algebra over an arbitrary field.
\end{abstract}

\bigskip
\keywords {Degree estimate, subalgebras,  free associative algebras, commutators, Malcev-Neumann algebras, centralizers, ordered groups}

\maketitle

\section{Introduction and the main result}

\noindent Let   $A_n=K\langle x_1,\cdots,x_n\rangle$  be the  free associative algebra of rank $n$ over a field $K$,\  $B$ a subalgebra of $A_n$ generated by two elements in $A_n\backslash K$.

\

\noindent Based on Bergman's Lemma on radicals \cite{B1}
that if the leading monomial of an element
in a Malcev-Neumann (power series) algebra (\cite{Ma, Ne1, Ne2, Ha}) over a field of characteristic $0$
has $n^{-th}$ roots, then so does the element itself,  Makar-Limanov and Yu \cite{MLY} gave a sharp lower degree bound for nonconstant elements in $B$ when the characteristic of $K$ is zero.

\

\noindent However, in the case of positive characteristic, the Lemma on radical is not true,
which can be shown by the following simple example that $x^2+x$ has
no square roots in the Malcev-Neumann (power series) algebra $F((x_1,\cdots,x_n))$ in free case over a field
$F$ of characteristic $2$. Therefore,
the method in \cite {MLY} is no longer applicable.

\

\noindent In this paper, based on  Bergman's Lemma on centralizers \cite{B1}, we generalize the degree estimate
in \cite{MLY} for any characteristic.

\begin{theorem}
Let $A_n=K\langle x_1,\cdots,x_k\rangle$ be a free associative algebra over a field $K$  and let $f, g\in A_n$ be algebraically independent elements over $F$. Suppose  the leading monomials $v(f)$ and $v(g)$ are algebraically dependent over $K$, and, neither $\deg(f)$ divides $\deg(g)$ nor $\deg(g)$ divides $\deg(f)$. Then for any
$P(x,y)\in K\langle x,y\rangle\backslash K$,
$$\deg(P(f,g))\geq w_{\deg(f), \deg(g)}(P(x,y))\frac{\deg([f,g])}{\deg(f)+\deg(g)}.$$
\end{theorem}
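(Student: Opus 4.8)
The plan is to first strip everything down to leading terms. Since $f$ and $g$ are algebraically independent while their leading monomials $v(f),v(g)$ are algebraically dependent, I would invoke the structural fact (a consequence of Bergman's centralizer theorem) that algebraically dependent elements of a free associative algebra lie in a common one-generated subalgebra and hence commute; for the \emph{monomials} $v(f),v(g)$ this forces them to be powers of a common word $w$, so $v(f)=\alpha w^{p}$ and $v(g)=\beta w^{q}$ with $\alpha,\beta\in K$. Hence $\deg(f)=p\deg(w)$ and $\deg(g)=q\deg(w)$, and the hypothesis that neither degree divides the other becomes $p\nmid q$ and $q\nmid p$. I would record this data inside the Malcev--Neumann series ring $K((\Gamma))$ over a suitable ordered group $\Gamma$ into which $A_n$ embeds, so that leading terms and the degree valuation are available uniformly in every characteristic.

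Next I would isolate the role of the commutator. Because $v(f)$ and $v(g)$ commute, the top-degree parts of $fg$ and $gf$ coincide, so $\deg([f,g])<\deg(f)+\deg(g)=:D$; write $r:=\deg([f,g])$, and note $0<r<D$, so the ratio $r/D$ lies in $(0,1)$. As a sanity check and the source of sharpness, take $P(x,y)=xy-yx$: then $w_{\deg(f),\deg(g)}(P)=D$ and the asserted bound reads $\deg([f,g])\ge D\cdot(r/D)=r$, which holds with equality. Thus the commutator is precisely the extremal configuration, and the theorem claims that no $P$ can force cancellation at a faster rate than $[x,y]$ does. The mechanism of cancellation is then as follows: putting $N:=w_{\deg(f),\deg(g)}(P)$ and letting $P_N$ be the top weighted-homogeneous part of $P$, the degree-$N$ component of $P(f,g)$ is obtained by substituting $v(f),v(g)$ into $P_N$; since these are scalar multiples of powers of $w$, every word in them collapses to a scalar multiple of $w^{N/\deg(w)}$, and $\deg(P(f,g))<N$ exactly when those scalars sum to zero. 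Equivalently, in the associated graded algebra $\gr A_n\cong A_n$ the elements $v(f),v(g)$ generate the commutative subalgebra $K[w^{p},w^{q}]$, and the deviation of $B=K\langle f,g\rangle$ from commutativity is recorded by the leading term of $[f,g]$, of degree $r$, so that the estimate becomes an inequality about degrees in $\gr B$.

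In characteristic $0$, Makar-Limanov and Yu resolve the cancellation by extracting a $\deg(w)$-th root of $v(f)$ inside the Malcev--Neumann ring (Bergman's Lemma on radicals), reducing $f,g$ to pure-power form and iterating. In positive characteristic such roots need not exist -- for instance $x^{2}+x$ has no square root over a field of characteristic $2$ -- so this route is blocked; this is the main obstacle. I would replace root extraction by Bergman's Lemma on centralizers: the centralizer of $f$ is a polynomial ring $K[h]$ in one variable, which pins down exactly which elements can have leading term proportional to a power of $w$, hence which elements can participate in cancellation at the top degree. With this in hand I expect to run an induction on the weighted degree $N$: whenever the degree-$N$ part of $P(f,g)$ cancels, one rewrites $P(f,g)$ by transposing an adjacent pair $fg\rightsquigarrow gf$, each transposition contributing a commutator $[f,g]$ of degree $r$ and thereby trading weight $D$ for actual degree at most $r$, while the centralizer lemma guarantees that the rewriting terminates and that the accumulated degree defect never exceeds the proportion $(D-r)/D$ of $N$. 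This would yield $\deg(P(f,g))\ge N-(D-r)N/D=Nr/D$, with the commutator as the equality case. The delicate point, and where I expect the real work to lie, is making the bookkeeping of accumulated defects precise in the characteristic-free setting, so that the centralizer lemma genuinely substitutes for the missing roots of the characteristic-zero argument.
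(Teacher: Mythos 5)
Your setup is sound---the reduction to leading terms, the observation that algebraic dependence of $v(f),v(g)$ forces them to be powers of a common word, the sharpness check with $P=[x,y]$, and the diagnosis that root extraction fails in positive characteristic all match the paper. But the actual proof is missing, and what you offer in its place does not work as stated. First, you conflate two different results of Bergman: the centralizer theorem for free algebras (centralizers of nonconstant elements are polynomial rings $K[h]$) and the Lemma on centralizers for Malcev--Neumann rings, which says that an element $a$ of $R((S))$ with invertible leading term $a_uu$ can be \emph{conjugated} by an element with leading term $1$ so that the support of the conjugate lies entirely in the centralizer of $u$ in $S$. The paper's first move is exactly this conjugation: it produces $f'=tft^{-1}$, a genuine Laurent series $\sum_i a_ih^i$ in the generator $h$ of $C_F(v(f))$, while $\deg P(f,g)=\deg P(f',g')$ because $v(t)=1$. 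Your proposal never performs this normalization; knowing only that the centralizer of $f$ in $A_n$ is $K[h]$ does not by itself control which terms of $f$ and $g$ can participate in cancellation.

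Second, and more seriously, your inductive scheme---transposing adjacent pairs $fg\rightsquigarrow gf$, charging each transposition a commutator of degree $r$, and asserting that ``the centralizer lemma guarantees that the accumulated degree defect never exceeds the proportion $(D-r)/D$ of $N$''---is precisely the statement to be proved, not an argument for it; you acknowledge this yourself. The paper does something quite different at this point: after conjugation it iteratively strips from $g'$ all terms lying in $K[h,h^{-1}]$ (the Lemma on steps shows this terminates, since $\deg(g'_k)\geq\deg([f,g])-\deg(f)$), arriving at $g'=\sum_i a_ih^i+s$ with $v(s)$ algebraically independent of $h$. The heart of the proof is then the combinatorial Lemma on monomials: writing $\widetilde f=t^m$, $\widetilde g=t^n+s$, one selects the monomial $z(x,y)$ of $\overline P$ with maximal $y$-degree (breaking ties by the lexicographic order $x\gg y$) and exhibits, by an explicit replacement scheme, a monomial $u(t,s)$ in the expansion of $z(t^m,t^n+s)$ with $\deg_s(u)\leq\lfloor N/(m+n)\rfloor$ which cannot be cancelled by the expansion of any other monomial of $\overline P$. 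Since each occurrence of $s$ costs degree $\deg([f,g])-m$ rather than $n$, this surviving monomial yields exactly the bound $\deg P(f,g)\geq N\deg([f,g])/(m+n)$. That survival argument---the identification of a specific uncancellable monomial---is the real content of the theorem, and nothing in your proposal substitutes for it.
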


\

\section{Proof of the main result}

\noindent First we introduce some terminologies.
Let $K$ be a field of characteristic $r$(zero or prime), $A_n$  the free associative algebra generated by $X=\{x_1,\cdots,x_n\}$ over $K$ where $n\geq 2$, and $F=\langle X\rangle$ be the free group generated by $X$. By a group order, we mean that it is a total order of the group as a set, and coincides to the operation of the group as well; namely, if a group $G$ has a group order, then $G$ is totally ordered as a set, and to any $a,b,c\in G$, if $a>b$, we always have $ca>cb$ and $ac>bc$.  Since it is possible to equip $F$ a group order which is an extension of the partial order of the total degree \cite{Ne2}, namely if $\deg(a(x_1,\cdots,x_n))>\deg(b(x_1,\cdots,x_n))$ where $a(x_1,\cdots,x_n),b(x_1,\cdots,x_n)\in F$, then $a(x_1,\cdots,x_n)>b(x_1,\cdots,x_n)$, $K((F))$ forms a Malcev-Neumann algebra \cite {Ma, Ne1, Ha} under this order. Any element $f\in A_n$ can be viewed as an element of $K((F))$. Let the leading term (namely the least element in the support) of $f$ be $c\cdot h$ with $c\in K^*$ and $h\in F$, we denote $h$ by $v(f)$ and $c$ by $c(f)$. For the degree functions, let $\deg$ be the total degree, or homogeneous degree, of a polynomial in $K((F))$ and $\deg_{x_i}$ be the partial degree relative to $x_i$. Here we will restate the definition of weighted degree of a polynomial which has been defined in \cite{B1, B2} just for convenience. The weighted degree $w_{k_1,\cdots,k_n}(m(x_1,
\cdots,x_n))$ of a monomial $m$ is equal to $\sum_{i=1}^nk_i\cdot\deg_{x_i}(m)$, and for a polynomial $p(x_1,\cdots,x_n)$, $w_{k_1,\cdots,k_n}(p)=max\{w_{k_1,\cdots,k_n}(m)|m\in supp(p)\}$. Obviously we have $\deg(m)=w_{1,\cdots,1}(m)$ and $\deg_{x_i}(m)=w_{0,\cdots,0,1,0,\cdots,0}$ where 1 is the $i^{-th}$ coordinite.

Let $f,g\in A_n$ be algebraically independent where $v(f)$ and $v(g)$ are algbraically dependent but $\deg(f)\nmid\deg(g)$, $\deg(g)\nmid\deg(f)$, and we assume that $\deg(g)=n>m=\deg(f)$.

\

\noindent Crucial to the proof of Theorem 1.1 is the following Bergman's Lemma on Centralizers \cite{B1, B2}.

\

\begin{lemma}[{\bf on centralizers}]
Let $R$ be a commutative ring, $S$ an ordered semigroup (the group order), and an element of $R((S))$ with invertible leading term $a_uu$. (Thus, $u$ is invertible in $S$, and $a_u$ in $R$). Then there exists an element $f$ with leading term $1$, such that the element $c=f^{-1}af$ (which clearly also has leading term $a_uu$) has support entirely in the centralizer of $u$ in $S$.
\end{lemma}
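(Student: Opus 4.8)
The plan is to eliminate the \emph{bad} terms of $a$ --- those whose group part does not commute with $u$ --- one at a time by successive conjugations, starting from the smallest, and to assemble the conjugators into a single transfinite product. Throughout I write $a=a_uu+\sum_{s>u}a_ss$, noting that $u\in C_S(u)$ so the leading term is harmless and every other term has group part $>u$. Since conjugation by an element of leading term $1$ fixes the leading term, the coefficient $a_u\in R^*$ stays the leading coefficient at every stage, and I will repeatedly use its invertibility.

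First I would isolate a single elimination step. Suppose the current element $a^{(\alpha)}$ still has a bad term, and let $w$ be the least group element of its support with $wu\ne uw$; then $w>u$. I would conjugate by a two-term factor $h=1+bt$ with $t>1$, chosen so that $w$ is exactly the leading group element of the correction. The relevant computation is $h^{-1}a^{(\alpha)}h=a^{(\alpha)}+a_ub(ut-tu)+(\text{strictly higher terms})$, coming from $[a_uu,bt]=a_ub(ut-tu)$. Because $S$ is totally ordered and $uw\ne wu$, exactly one of $uw<wu$ or $uw>wu$ holds; in the first case I take $t=u^{-1}w$ (so $ut=w$ and $tu=u^{-1}wu>w$), in the second $t=wu^{-1}$ (so $tu=w$ and $ut=uwu^{-1}>w$). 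In either case $t>1$ (as $w>u$ and the order is invariant under the appropriate one-sided multiplication), the unique contribution to the $w$-coefficient of the correction is $\pm a_ub$, and a routine check confirms all remaining correction terms have group part $>w$. Setting $b=\mp a_w/a_u$ (well defined since $a_u$ is invertible) kills the $w$-term. This yields the \textbf{progress lemma}: $h^{-1}a^{(\alpha)}h$ agrees with $a^{(\alpha)}$ on all group parts $\le w$ except that the $w$-term is now $0$, so its least bad term is strictly larger than $w$.

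Then I would iterate by transfinite recursion: set $a^{(0)}=a$, at a successor stage apply the step above to produce $h_\alpha$ and $a^{(\alpha+1)}=h_\alpha^{-1}a^{(\alpha)}h_\alpha$, and at a limit stage pass to the limit of the partial products $g_\alpha=h_0h_1\cdots$. The eliminated elements $w_\alpha$ form a strictly increasing sequence, and since the stage-$\alpha$ conjugation alters no term below $w_\alpha$, the partial conjugates $g_\alpha^{-1}ag_\alpha$ stabilize on every initial segment of $S$. The desired conjugator is the limit $f=\lim_\alpha g_\alpha$; it has leading term $1$ because each $h_\alpha$ does and the corrections are higher, and $c=f^{-1}af$ then has leading term $a_uu$ with, by construction, no bad term surviving.

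The hard part will be making this transfinite process converge inside $R((S))$: I must show that the family $\{h_\alpha\}$ is summable, i.e.\ that the product $\prod_\alpha h_\alpha$ is a well-defined element with well-ordered support, and that the recursion genuinely exhausts every bad group element rather than accumulating newly created ones below some bound. This is where the Malcev--Neumann structure is essential: I would combine the well-ordering of supports in $R((S))$ with Neumann's summability criterion, using that each factor $h_\alpha=1+b_\alpha t_\alpha$ perturbs only terms $\ge w_\alpha$ and that the $t_\alpha\in\{u^{-1}w_\alpha,\,w_\alpha u^{-1}\}$ are controlled by the strictly increasing $w_\alpha$, to conclude that on each fixed group element only finitely many stages act nontrivially. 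Granting convergence, the limit $c$ has support entirely in $C_S(u)$, since any surviving bad element would have been the least bad element at some stage and hence eliminated there.
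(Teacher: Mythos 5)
Your proposal is correct in substance, and your local elimination step is exactly the paper's: your case split on $uw<wu$ versus $uw>wu$, with conjugating factor built from the \emph{larger} of $u^{-1}w$ and $wu^{-1}$ (the paper's $\frac{t}{u}=\max\{tu^{-1},u^{-1}t\}$), reproduces Cases 1 and 2 of the paper's proof, and choosing the maximum is indeed the crucial point, since the smaller choice would create a term below $w$ and destroy progress. Where you genuinely diverge is in the globalization. The paper (following Bergman) runs Zorn's lemma on the poset of triples $(t,b,e)$, where $b$ is a partial answer supported in $C_S(u)$, $e$ a partial conjugator, and $t=v(ebe^{-1}-a)$ measures agreement; the order forces $\text{supp}(b'-b)\subseteq[t,t')$ and $\text{supp}(e'-e)\subseteq[\frac{t}{u},\frac{t'}{u})$, so along any chain the differences sit in disjoint consecutive intervals, the union of supports is well-ordered by the initial-segment argument, and chain upper bounds come for free by ``piecing together.'' Maximality then replaces your exhaustion argument, and no convergence of infinite products ever has to be proved. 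Your transfinite recursion instead pushes all the difficulty into exactly the point you flag as ``the hard part'': summability of $\prod_\alpha(1+b_\alpha t_\alpha)$ and the behavior at limit stages. That step is fillable but is left as a sketch: the precise tool is Neumann's lemma (a well-ordered subset of elements $>1$ in an ordered group generates a well-ordered sub-semigroup in which every element has only finitely many factorizations), which applies because your $t_\alpha=\max\{u^{-1}w_\alpha,w_\alpha u^{-1}\}$ are all $>1$ and strictly increasing (the paper notes $x>y$ implies $\frac{x}{u}>\frac{y}{u}$), and termination before $|S|^+$ follows since the $w_\alpha$ are strictly increasing, hence injective, into the set $S$. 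A minor structural difference: the paper's Case 3 absorbs terms commuting with $u$ into $b$, whereas you simply leave good terms in place and kill only bad ones -- equivalent outcomes. In exchange for the convergence burden, your version is more algorithmic, and in fact matches the paper's own Remark 2.3 and Example 2.2, which describe the Zorn argument as encoding precisely this successive-approximation process.
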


\noindent Now we re-present the proof of Lemma on centralizers  in \cite {B1, B2} for self-contain-ness of this paper
as the journal that \cite{B1, B2} appeared is not well circulated.

\begin{proof}

\noindent Clearly, we may assume without loss of generality that $a_u=1$.

\noindent Let $\infty$ be a symbol outside of $S$ with the property $\forall s\in S$, $s<\infty$, and let $S'=S\cup \{\infty\}$. Of course $S'$ is a totally ordered set. By `the leading term of $r\in R((S))$ is $\alpha t$', we mean that if $r=0$, then $t=\infty$ and $\alpha$ is undefined. To each pair $x,y\in S'$, the intervals of different types are defined as follows:  $[x,y]=\{s\in S'|x\leq s\leq y\}$; $(x,y]=\{s\in S'|x<s\leq y\}$; $[x,y)=\{x\in S'|x\leq s<y\}$; $(x,y)=\{s\in S'|x<s<y\}$.

\

\noindent For $s,t\in S$, $s$ being invertible, we define $\frac{t}{s}=max\{ts^{-1},s^{-1}t\}$. We also define $\frac{\infty}{s}=\infty$. Easy to get that $x>y$ implies $\frac{x}{s}>\frac{y}{s}$.

\

\noindent Let $X$ be the set of all 3-tuples $(t,b,e)$ where $t\in (u,\infty]$, $b\in R((S))$ with $v(b)=u, c(b)=1$ and $supp(b)\subseteq [u,t)\cap C_u(S)$, and $e$ is an element with leading term 1 and support in $[1, \frac{t}{u})$ such that $v(ebe^{-1}-a)=t, c(ebe^{-1}-a)=\alpha$(here we mean that if $ebe^{-1}-a=0$, then $t=\infty$, and if not, $\alpha\in R-\{0\}$).

\

\noindent Now establish a partial order on $X$: $(t,b,e)<(t',b',e')$ if and only if $t<t'$, $\text{supp}(b'-b)\subseteq [t,t')$ and $\text{supp}(e'-e)\subseteq [\frac{t}{u}, \frac{t'}{u})$ (here notice that surely $\frac{t}{u}<\frac{t'}{u}$ as being proved). The last two conditions say that $b'$, $e'$ "extend" $b$ and $e$.

\

\noindent $X$ is nonempty since $(v(a-u),u,1)\in X$. Hence, to each ascending chain $\{(t_l,b_l,e_l)|l\in N^+\}$, we just `piece together' $b_l$ and $e_l$ as $b$ and $e$, and let $t=v(ebe^{-1}-a)$ (obviously here $t\geq t_l$ for each $l$), and then $(t,b,e)$ becomes the upper bound of the chain. Hence, according to Zorn's Lemma, $X$ has a maximal one.

\

\noindent We now prove that if $t<\infty$, $(t,b,e)$ can not be a maximal element.  If not, let $(t,b,e)$ with $t<\infty$ be a maximal element, and we have three cases.

\

\noindent ${\bf Case\ 1}$. $tu^{-1}>u^{-1}t$. Then $\frac{t}{u}=tu^{-1}$. Let $e'=e-\alpha tu^{-1}$, and hence $e'^{-1}=e^{-1}+\alpha tu^{-1}+o(tu^{-1})$ where $o(tu^{-1})$ means that it is an element of $R((S))$ each of whose support is greater that $tu^{-1}$. Let $b'=b$, and $t'=v(e'b'e'^{-1}-a)$. Since $(e-\alpha tu^{-1})b(e^{-1}+\alpha tu^{-1}+o(tu^{-1}))-a=(ebe^{-1}-a)+\alpha ebtu^{-1}+ebo(tu^{-1})-\alpha tu^{-1}be^{-1}-\alpha^2tu^{-1}btu^{-1}-\alpha tu^{-1}bo(tu^{-1})$, and $v(\alpha ebtu^{-1})=utu^{-1}>t$, $v(ebo(tu^{-1}))>utu^{-1}>t$, $v(-\alpha^2 tu^{-1}btu^{-1})=tu^{-1}utu^{-1}=t^2u^{-1}>t$(notice that $t>u$), $v(-\alpha tu^{-1}bo(tu^{-1}))>tu^{-1}utu^{-1}>t$, $v((ebe^{-1}-a)-\alpha tu^{-1}be^{-1})=v((\alpha t+o(t))-\alpha tu^{-1}u+o(t))>t$, as well as $v((ebe^{-1}-a)+\alpha ebtu^{-1}+ebo(tu^{-1})-\alpha tu^{-1}be^{-1}-\alpha^2tu^{-1}btu^{-1}-\alpha tu^{-1}bo(tu^{-1}))\geq max\{v((ebe^{-1}-a)-\alpha tu^{-1}be^{-1}),$

\noindent $v(\alpha ebtu^{-1}), v(ebo(tu^{-1})), v(-\alpha^2tu^{-1}btu^{-1}), v(-\alpha tu^{-1}bo(tu^{-1}))\}$, $t'>t$. It means that $(t',b',e')>(t,b,e)$ which contradicts to $(t,b,e)$ being maximal.

\

\noindent ${\bf Case\ 2}$. $tu^{-1}<u^{-1}t$. Similar to case 1, we just let $e'=e-\alpha u^{-1}t, b'=b$,  and $v(e'b'e'^{-1}-a)>t$.

\

\noindent ${\bf Case\ 3}$. $tu^{-1}=u^{-1}t$. Then $t$ commutes with $u$, so we can let $e'=e, b'=b-\alpha t$, and hence $e'b'e'^{-1}-a=e(b-\alpha t)e^{-1}-a=(ebe^{-1}-a)-\alpha ete^{-1}$. Since $ebe^{-1}-a=\alpha t+o(t), v(\alpha ete^{-1})=t$, $v((ebe^{-1}-a)-\alpha ete^{-1})>t$, namely $t'>t$ which contradicts to $(t,b,e)$ being maximal.

\noindent Therefore, there must exist some $(t,b,e)$ such that $t=\infty$, namely $ebe^{-1}=a$, or $e^{-1}ae=b$.
\end{proof}

\noindent Let us give an example in $K((F))$ to understand Bergman's Lemma on centralizers and its proof. Here we will use the opposite definition of "well-ordered" on $F$, namely each subset has a greatest element.

\begin{example}
In $F$ we assume $x>y$ and $xy\cdot (x^2)^{-1}< (x^2)^{-1}\cdot xy$ (of course $xy\cdot (x^2)^{-1}>(x^2)^{-1}\cdot xy$ is also feasible since they are both extended total orders of the partial order of degree) and let $a=x^2+xy$. By Bergman's method, we establish the approximation starting from $(xy,x^2,1)(b=v(a), t=v(a-v(a)),e=1)$. Then $e'=e+xy\cdot (x^2)^{-1}=1+xy\cdot (x^2)^{-1}$ and $(e')^{-1}=e^{-1}-xy\cdot (x^2)^{-1}+O(xy\cdot (x^2)^{-1})=1-xy\cdot (x^2)^{-1}+O(xy\cdot (x^2)^{-1})$ where $O(xy\cdot (x^2)^{-1})$ means all the monomials behind are all less than $xy\cdot (x^2)^{-1}$. $b'=b=x^2$, and since $e'b'(e')^{-1}=(1+xy\cdot (x^2)^{-1})x^2(1-xy\cdot (x^2)^{-1}+O(xy\cdot (x^2)^{-1}))$, it is easy to get that $v(e'b'(e')^{-1}-a)=x^2\cdot xy\cdot (x^2)^{-1}$ since $x>y$, namely $t'=x^2\cdot xy\cdot (x^2)^{-1}$.

\

\noindent After $k$ steps, we get the three-tuple  $(t_k,b_k,e_k)$. Now we claim that to $all$ the $t_i's$, if $t_i\not=\infty$, then $deg(t_i)=2$, and all the $e_i's$ are homogenous of degree 0 and $b_i=x^2$ all the way. For $k=1$, we see $t_1=x^2\cdot xy\cdot (x^2)^{-1}$, $e_1=1+xy\cdot (x^2)^{-1}$, $b_1=x^2$ and it satisfies. Assume that it is correct for $k=n-1$. If $t_{n-1}=\infty$, then $e_{n-1}b_{n-1}e^{-1}_{n-1}=a$, and we prove it. If not, since $t_{n-1}$ is a monomial of degree 2 however it is less than $x^2$, so it can not commute with $x^2$\ (By Bergman \cite{B},  the centralizer of any element of $K\langle x_1,\dots,x_n\rangle\backslash K$ is a polynomial algebra in one variable over $K$).  Hence $b_n=b_{n-1}=x^2$, $e_n=e_{n-1}+\alpha t_n\cdot x^{-2}/\alpha x^{-2}\cdot t_n$,\ and the new term of $e_n$ will always has degree $0$. Then $e_n$ is also homogenous of degree $0$ and so is $e_n^{-1}$. Obviously $e_nb_ne_n^{-1}$ is homogenous of degree $2$ and since $a$ is homogenous of degree $2$, $e_nb_ne_n^{-1}-a$ is homogeneous of degree $2$ or equal to $0$, namely $\deg(t_n)=2$ or $t_n=\infty$.

\

\noindent It means that after finite steps of the algorithm, we always get $eae^{-1}=x^2+t$ where $\deg(t)=2$, or we get $eae^{-1}=x^2$. Now we consider the subset $S$ of three-tuples $(t,b,e)$ defined in the proof of Lemma on centralizers where $e$ being homogeneous of degree 0 and $b=x^2$. Since $a$ is homogenous of degree $2$,\ $t$ is also of degree 2 or $\infty$. Then, By preserving the order introduced by Bergman on $S$, if $t$ is not $\infty$, we can always construct an `extension' of $b$ and $e$ such that $(t',b',e')\in S$ is greater. However, by the `piece together', we will always get a maximal element, and hence we get the maximal element with $t=\infty$, namely there exists an $e$ which is homogenous of degree 0 such that $eae^{-1}=x^2$.
\end{example}

\begin{remark}
The steps in the proof of Bergman is to construct a `better approximative' element to the
maximal element instead of  calculating the maximal three-tuple.
\end{remark}

\noindent According to the discussion in the example above, we obtain

\begin{proposition}
If an element $a\in K((F))$ is homogenous, then there exists some $e\in K((F))$ with leading term $1$ which is homogenous of degree $0$ such that $eae^{-1}=c(a)v(a)$.
\end{proposition}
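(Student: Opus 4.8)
The plan is to specialize the proof of the Lemma on centralizers to the homogeneous situation and to track how homogeneity constrains the three-tuples $(t,b,e)$ produced by Bergman's construction. Write $d=\deg(a)$ and $u=v(a)$; after rescaling we may take $c(a)=1$, and the case $d=0$ (where $a$ is a nonzero scalar and $e=1$ works) is trivial, so assume $d>0$. Applying the Lemma on centralizers to $a$ produces a maximal element $(t,b,e)$ of the poset $X$ with $t=\infty$, hence $ebe^{-1}=a$, where $b$ has leading term $u$ and support in the centralizer of $u$ in $F$. First I would show that, because $a$ is homogeneous, the entire construction can be carried out inside the sub-poset $X_0\subseteq X$ of those triples for which $b=u$ and $e$ is homogeneous of degree $0$. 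The maximal element then lies in $X_0$, giving $ebe^{-1}=a$ with $e$ homogeneous of degree $0$ and leading term $1$; this is the assertion after replacing $e$ by $e^{-1}$ to match the stated form $eae^{-1}=c(a)v(a)$.

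The two facts that keep the construction inside $X_0$ are exactly the ones observed in the Example. First, whenever $e$ is homogeneous of degree $0$ and $b=u$ is homogeneous of degree $d$, the element $ebe^{-1}-a$ is homogeneous of degree $d$ (or zero), so every $t=v(ebe^{-1}-a)$ that occurs has $\deg(t)=d$. Consequently the corrections to $e$ in Case $1$ and Case $2$, namely $\pm\alpha tu^{-1}$ and $\pm\alpha u^{-1}t$, are homogeneous of degree $\deg(t)-\deg(u)=0$, so $e$ stays homogeneous of degree $0$. Second, Case $3$ — the only case that would enlarge $b$ — never occurs: it requires a monomial $t$ with $\deg(t)=d$ commuting with $u$, yet $t$ lies strictly beyond the leading term, so $t\neq u$. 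Since two elements of the free group $F$ commute only when both are powers of a common element $w$, and $w^{q}$ has degree $q\deg(w)$, the equalities $\deg(t)=\deg(u)=d>0$ force the two exponents to agree, whence $t=u$, a contradiction. Thus $b$ is never modified and remains equal to $u$.

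It then remains to check that $X_0$ is stable under the two operations in Bergman's argument. I would verify that the ``piecing together'' of an ascending chain in $X_0$ again lands in $X_0$: the limiting $e$ is a sum of homogeneous degree-$0$ terms, hence homogeneous of degree $0$, while $b$ is constantly $u$, so the upper bound $(t,b,e)$ stays in $X_0$. Using the two facts above, the extension step applied to a non-maximal element of $X_0$ yields a strictly larger element still in $X_0$. Zorn's Lemma inside $X_0$ therefore produces a maximal triple, which by the reasoning of the Lemma must have $t=\infty$.

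I expect the main obstacle to be the commutation argument that rules out Case $3$ cleanly, as this is the one place where the specific structure of the free group $F$ — equivalently, Bergman's description of the centralizer of a nonconstant element of $A_n$ as a polynomial algebra in one variable — is genuinely needed; everything else is degree bookkeeping. One must also be mildly careful that $t$ need not be a positive word, but since its total degree is the fixed positive value $d$, the ``power of a common element'' description still forces $t=u$.
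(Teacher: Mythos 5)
Your construction is essentially the paper's own proof. The paper justifies this proposition by the discussion in Example 2.2, which is precisely your argument run on $a=x^2+xy$: restrict Bergman's poset to triples in which $b$ stays equal to $v(a)$ and $e$ stays homogeneous of degree $0$, observe that then $ebe^{-1}-a$ is homogeneous (so every finite $t$ has $\deg(t)=\deg(a)$), rule out Case 3 because a monomial $t\neq v(a)$ of the same degree cannot commute with $v(a)$ (centralizers of nontrivial elements of $F$ are cyclic), and re-run the chain/Zorn argument inside this sub-poset. For $\deg(a)\neq 0$ your write-up is correct, and in fact more careful than the paper's, which only treats the single example and then asserts the general statement.

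The one genuine error is your reduction of the case $d=0$: you claim that a homogeneous element of degree $0$ in $K((F))$ is a nonzero scalar, but $F$ is the free \emph{group}, so for instance $a=1+xy^{-1}$ is homogeneous of degree $0$ and is not a scalar. For such elements your exclusion of Case 3 genuinely breaks down (two distinct elements of $F$ of common degree $0$, such as $1$ and $xy^{-1}$, do commute), and in fact the proposition itself fails there: $eae^{-1}=c(a)v(a)$ would require conjugating $1+xy^{-1}$ either to $1$, which forces $a=1$, or to $xy^{-1}$, which is impossible because the coefficient of the identity element is a conjugation invariant of $K((F))$ (as in group rings, $\tau_1(bc)=\tau_1(cb)$, the relevant sums being finite by the Malcev--Neumann support condition), and that coefficient is $1$ for $1+xy^{-1}$ but $0$ for $xy^{-1}$. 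So the hypothesis you actually need is $\deg(a)\neq 0$, under which your argument works verbatim (including $d<0$: all you use is that $i\deg(w)=j\deg(w)=d\neq 0$ forces $i=j$). Note that this defect is inherited from the paper itself, whose Proposition 2.4 carries no degree restriction and whose proof-by-example never addresses degree $0$; in the only situation the paper uses (leading forms coming from nonconstant polynomials, hence $d>0$) your proof is complete.
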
\qed

\noindent Then according to Lemma on centralizers, there exists some $t\in K((F))$ with $c(t)v(t)=1$ such that  the support of $tft^{-1}$ is in $C_F(v(f))$. Let $v(f)=h^q$ where $h$ is the generator of $C_F(h)$, and then $tft^{-1}=\sum_{i=-\infty}^q a_ih^i$ with $a_i\in K$. Let $f'=tft^{-1}$, $g'=tgt^{-1}$, and we have the following

\begin{lemma}
For any $P(x,y)\in K\langle x,y\rangle$,\ $P(f',g')=tP(f,g)t^{-1}$.
\end{lemma}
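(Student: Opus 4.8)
The plan is to recognize the conjugation map $\phi_t\colon a\mapsto tat^{-1}$ as a $K$-algebra homomorphism of $K((F))$, and then to observe that evaluation of a noncommutative polynomial is natural with respect to such homomorphisms. First I would check that $t$ is genuinely invertible: by hypothesis $c(t)v(t)=1$, so $t\neq 0$, and since $K((F))$ is a Malcev-Neumann division algebra over $K$, the inverse $t^{-1}\in K((F))$ exists; hence $\phi_t$ is well defined. By the definitions $f'=tft^{-1}=\phi_t(f)$ and $g'=tgt^{-1}=\phi_t(g)$.

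Next I would verify the three homomorphism properties. Additivity, $\phi_t(a+b)=t(a+b)t^{-1}=tat^{-1}+tbt^{-1}=\phi_t(a)+\phi_t(b)$, is immediate from distributivity. Multiplicativity follows from the telescoping $\phi_t(ab)=t(ab)t^{-1}=(tat^{-1})(tbt^{-1})=\phi_t(a)\phi_t(b)$, using $t^{-1}t=1$. Finally, for a scalar $c\in K$ one has $\phi_t(c)=tct^{-1}=c$, since $K$ sits centrally in $K((F))$; thus $\phi_t$ fixes $K$ and is $K$-linear.

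With these properties the conclusion is a formal consequence. Writing $P(x,y)=\sum_w c_w\,w$ as a $K$-linear combination of words $w$ in the letters $x,y$, additivity and $K$-linearity reduce the claim to a single word, which I would handle by induction on its length. The empty word is a scalar, covered by $\phi_t(c)=c$; for the inductive step a word of the form $x\,w'$ evaluated at $(f,g)$ gives $\phi_t\bigl(f\cdot w'(f,g)\bigr)=\phi_t(f)\,\phi_t\bigl(w'(f,g)\bigr)=f'\cdot w'(f',g')$ by multiplicativity and the induction hypothesis, and similarly for a word beginning with $y$. Reassembling with the coefficients $c_w$ yields $\phi_t\bigl(P(f,g)\bigr)=P(f',g')$, that is, $P(f',g')=tP(f,g)t^{-1}$.

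I do not expect a real obstacle here: the statement is precisely the assertion that substitution into a free-algebra polynomial commutes with a $K$-algebra homomorphism. The only points that genuinely require care are the invertibility of $t$ in $K((F))$ and the centrality of the scalars $K$, both of which are built into the Malcev-Neumann setup; once these are in place, the multiplicative telescoping does all the work.
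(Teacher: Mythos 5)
Your proof is correct, and its core mechanism is the same one the paper uses: the telescoping cancellation $t^{-1}t=1$, which turns conjugation into a $K$-algebra homomorphism so that it commutes with polynomial evaluation. The difference is one of rigor rather than strategy. The paper writes $P(x,y)=\sum_{i,j}a_{ij}x^iy^j$ and telescopes $(tft^{-1})^i(tgt^{-1})^j=tf^ig^jt^{-1}$; strictly speaking this decomposition does not cover a general element of the free algebra $K\langle x,y\rangle$, whose monomials are arbitrary words such as $xyx$ or $yx^2y$, not just the ordered products $x^iy^j$. Your version --- decomposing $P$ into words and inducting on word length, after checking that $\phi_t$ is a $K$-algebra homomorphism fixing $K$ --- handles exactly this general case, so it closes a small gap left by the paper's phrasing. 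Your explicit verification that $t$ is invertible in $K((F))$ and that $K$ is central is also a worthwhile addition; the paper takes both for granted. In short: same idea, but your write-up is the more complete of the two.
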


\begin{proof}
Let $P(x,y)=\sum_{i=0}^m\sum_{j=0}^na_{ij}x^iy^j$ for some nonnegative integers $i$ and $j$ where $a_{ij}\in K$. Then $$P(f',g')=\sum_{i=0}^m\sum_{j=0}^na_{ij}f'^ig'^j=\sum_{i=0}^m\sum_{j=0}^na_{ij}(tft^{-1})^i(tgt^{-1})^j$$
$=\sum_{i=0}^m\sum_{j=0}^na_{ij}tf^ig^jt^{-1}=t(\sum_{i=0}^m\sum_{j=0}^na_{ij}f^ig^j)t^{-1}=tP(f,g)t^{-1}$.
\end{proof}

\

\noindent Since $v(t)=1$, $\deg(P(f,g))=\deg(P(f',g'))$ where the degree function is the homogenous degree of $K((F))$. So we can just do  degree estimate for $P(f',g')$.

\

\noindent Two elements of $A_n$ are called algebraically independent over $K$ if  they generate a subalgebra of rank two. If $v(f)$ and $v(g)$ are algebraically independent, then for all $P(x,y)\in K\langle x,y\rangle\backslash K$,\

\noindent $\deg(P(f,g))=w_{\deg(f),\deg(g)}(P(x,y))$, so we may assume without loss of generality that
$v(f)$ and $v(g)$ are algebraically dependent. However, if $\deg(f)\mid\deg(g)$ or $\deg(g)\mid\deg(f)$, then $\deg(f)+\deg(g)$ can be reduced by some automorphism, so we also assume $\deg(f)\nmid\deg(g)$ as well as $\deg(g)\nmid\deg(f)$. We  assume that $f$ and $g$ are algebraically independent over $K$ but $v(f)$ and $v(g)$ are not. Hence since $v(f')=v(f)$ and $v(g')=v(g)$, $f'$ and $g'$ are algebraically independent but $v(f')$ and $v(g')$ are algebraically dependent.  Then since $h$ generates its own centralizer in $A_n$,\ $v(g')=h^p$ for some positive integer $p$. Let $g'=h^p+g'_1$ where $v(g'_1)< h^p$, and if $v(g'_1)$ and $h$ are dependent, then $v(g'_1)=h^{p_1}$ for some integer $p_1$ which is less than $p$. This can be done inductively.

\begin{lemma}[\textbf{on\ steps}]

The above process  will stop after a finite number of steps.
\end{lemma}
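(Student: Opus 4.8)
The plan is to argue by contradiction: assume the peeling process described just above runs for infinitely many steps, and derive that $g'$ would then be supported entirely on $\langle h\rangle=C_F(v(f))$, which contradicts the algebraic independence of $f$ and $g$. So suppose the process never terminates. By construction each step removes the current leading monomial and continues only when that monomial is a power of $h$; hence at every stage $k$ we have $v(g'_k)=h^{p_k}$ with strictly decreasing exponents $p=p_0>p_1>p_2>\cdots$. An infinite strictly decreasing sequence of integers is unbounded below, so $p_k\to-\infty$, and since $\deg(h)>0$ (because $v(f)=h^q$ with $q\ge 1$ is the leading monomial of a nonconstant polynomial), we get $\deg(h^{p_k})=p_k\deg(h)\to-\infty$.

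The key step is to show that an infinite process forces $\operatorname{supp}(g')\subseteq\langle h\rangle$. First I note that each peeled monomial $h^{p_k}$ is a genuine element of $\operatorname{supp}(g')$, since at every stage we subtract only an exact scalar multiple of the current leading monomial; moreover $h^{p_k}$ is precisely the greatest remaining support element at stage $k$ (this is what $v(g'_{k})<v(g'_{k-1})$ records). Now suppose some $s\in\operatorname{supp}(g')$ were not a power of $h$. Then $s$ is never peeled off, so it survives in every remainder $g'_k$ and therefore satisfies $s<h^{p_k}$ for all $k$. Because the chosen group order refines the total degree (if $\deg(a)>\deg(b)$ then $a>b$), the inequality $s<h^{p_k}$ forces $\deg(s)\le\deg(h^{p_k})$; letting $k\to\infty$ gives $\deg(s)\le p_k\deg(h)\to-\infty$, which is absurd for a fixed element of $F$. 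Hence no such $s$ exists and $g'$ is supported entirely on $\langle h\rangle$.

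Finally I would close the argument. By the choice of $t$ in the Lemma on centralizers the support of $f'=tft^{-1}$ already lies in $C_F(v(f))=\langle h\rangle$, and we have just shown the same for $g'$; since $\langle h\rangle\cong\Z$ is abelian, $K((\langle h\rangle))$ is a commutative ring, so $f'$ and $g'$ commute. Consequently $[f,g]=t^{-1}[f',g']t=0$. But $f$ and $g$ are algebraically independent in $A_n$, and by Bergman's theorem that the centralizer of a nonconstant element of a free associative algebra is a polynomial algebra $K[w]$ in one variable, two commuting nonconstant elements would lie in a single such $K[w]$ and hence be algebraically dependent, a contradiction. Therefore the process must halt after finitely many steps.

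I expect the main obstacle to be the structural claim of the second paragraph, namely converting the statement ``infinitely many successive leading terms happen to be powers of $h$'' into ``all of $g'$ is supported on $\langle h\rangle$''. This is exactly where the Malcev--Neumann hypotheses earn their keep: the well-ordering of supports guarantees both that the leading monomials form a genuine strictly monotone sequence and that a would-be greatest non-power term exists, while the compatibility of the group order with total degree is what prevents any support from hiding below the unbounded descending tail $h^{p_k}$. Once that claim is secured, the reduction to $[f,g]=0$ and the appeal to Bergman's centralizer theorem are immediate, so the bulk of the care should go into making the degree comparison $\deg(s)\le\deg(h^{p_k})$ rigorous under the stated order conventions.
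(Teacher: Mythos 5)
Your proof is correct, but it takes a genuinely different route from the paper's. The paper argues directly and quantitatively: since $\operatorname{supp}(f')\subseteq\langle h\rangle$, the peeled powers of $h$ commute with $f'$, so $[f',g']=[f',g'_k]$ for every $k$, whence $\deg(g'_k)\geq \deg([f',g'])-\deg(f')=\deg([f,g])-\deg(f)$; as each step lowers $\deg(g'_k)$ by at least $1$, the process stops within $\deg(fg)-\deg([f,g])$ steps. You instead argue by contradiction: an infinite process forces $\operatorname{supp}(g')\subseteq\langle h\rangle$ (your order-refines-degree argument here is sound, and is exactly what is needed, since Malcev--Neumann supports are allowed to descend infinitely), hence $f'$ and $g'$ both lie in the commutative ring $K((\langle h\rangle))$, so $[f,g]=t^{-1}[f',g']t=0$, contradicting algebraic independence. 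What the paper's route buys is an explicit bound on the number of steps and, more importantly, the uniform lower bound $\deg(g'_k)\geq\deg([f,g])-\deg(f)$, which is precisely the quantity ``recalled'' later as $\deg(s)=\deg([f,g])-m$ in the proof of Theorem 1.1; your route does not produce this by-product. What your route buys is that it surfaces a hypothesis the paper leaves implicit: the paper's lower bound is vacuous (equal to $-\infty$) when $[f,g]=0$, and ruling that out via algebraic independence --- which is your final step --- is needed in either approach. One simplification: your appeal to Bergman's centralizer theorem at the end is heavier than necessary, since $[f,g]=0$ already exhibits the nontrivial relation $P(f,g)=0$ with $P(x,y)=xy-yx\in K\langle x,y\rangle\backslash K$, so algebraic dependence of $f$ and $g$ follows immediately from the definition.
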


\begin{proof}
After $k$ steps, let $g'=\sum_{i=1}^k a_ih^{m_i}+g'_k$. Obviously $\deg([f',g'])=\deg([f',g'_k])\leq deg(f')+deg(g'_k)$, so $\deg(g'_k)\geq \deg([f',g'])-\deg(f')=\deg([f,g])-\deg(f)$. Here notice that $\deg(h)>0$, so after each step, if possible, the $\deg(g'_i)$ decreases by at least 1 which means after at most
$$\deg(g)-(deg([f,g])-deg(f))=\deg(fg)-\deg([f,g])$$ steps, the process will stop.
\end{proof}

\noindent Hence, after a finite number of steps we  get $g'=\sum_{i=p-k}^p a_ih^{i}+s$ where $v(s)$ and $h$ are algebraically independent.

\

\noindent Let $C$ be the subalgebra generated by $h$,\ $h^{-1}$ and $s$, and equip it with the weighted degree function $w_{1,p}$ where $w_{1,p}(h)=1$ and $w_{1,p}(s)=p$.  Of course $f',g'\in C$, and we write $\widetilde{f'}$, $\widetilde{g'}$ as the leading parts of $f'$ and $g'$ respectively relative to $w_{1,p}$. To any polynomial $P(x,y)$, let $\overline{P}$ denote the leading part relative to the weighted degree function $w_{q,p}$.  Let $deg$ be the homogenous degree of $A_n$, and we have:

\begin{lemma}[\textbf{on\ degrees}]
$\overline{P}(\widetilde{f'},\widetilde{g'})\not=0$ and
$$\deg(P(f',g'))\geq\deg(\overline{P}(\widetilde{f'},\widetilde{g'})).$$
\end{lemma}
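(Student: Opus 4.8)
The strategy is to carry out all the bookkeeping inside $C=K\langle h,h^{-1},s\rangle$ and to exploit two gradings on it. Since $v(f')=h^q$ and $v(s)$ are algebraically independent in the ordered group $F$, the leading monomials of distinct reduced words in $h$ and $s$ are distinct, so $h$ and $s$ generate a free subalgebra and $C$ is free on $h^{\pm1}$ and $s$; in particular both the weight $w_{1,p}$ and the auxiliary $s$-degree $\deg_s$ (the number of occurrences of $s$) are honest $\Z$-gradings of $C$. I would first record the leading forms explicitly: $f'=\sum_{i\le q}a_ih^i$ lives in $\langle h\rangle$ with top weight $q$, so $\widetilde{f'}=c(f)h^q$; and $s$ is $w_{1,p}$-homogeneous of weight $p$ while the powers $h^i$ with $i<p$ have weight $<p$, so $\widetilde{g'}=c(g)h^p+s$. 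Two facts will be used repeatedly: $f'$ and $g'$ differ from $\widetilde{f'}$ and $\widetilde{g'}$ only by terms of strictly smaller weight, which moreover have strictly smaller homogeneous degree, namely $\deg(f'-\widetilde{f'})<m=\deg\widetilde{f'}$ and $\deg(g'-\widetilde{g'})<n=\deg\widetilde{g'}$; and, because $K((F))$ is a domain, $\deg$ is additive on products.

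For the non-vanishing I would show that $\widetilde{f'}$ and $\widetilde{g'}$ are themselves algebraically independent in $C$, whence the substitution $x\mapsto\widetilde{f'}$, $y\mapsto\widetilde{g'}$ is injective and $\overline P\ne0$ forces $\overline P(\widetilde{f'},\widetilde{g'})\ne0$. Taking a putative relation $\sum_w c_w\,w(\widetilde{f'},\widetilde{g'})=0$ with $w$ ranging over words in $x,y$, I would extract its top $\deg_s$-component: every $y$ is replaced by $s$ and every $x$ by $c(f)h^q$, producing $\sum_{\deg_y w=B}c_w\,w(c(f)h^q,s)$. As $h^q$ and $s$ are free in $C$, this component vanishes only if all its $c_w$ do, and downward induction on $\deg_s$ then annihilates every coefficient. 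Finally, since $w_{1,p}(\widetilde{f'})=q$ and $w_{1,p}(\widetilde{g'})=p$, a monomial $x^ay^b$ of $P$ acquires weight $qa+pb=w_{q,p}(x^ay^b)$ after substitution, so $\overline P(\widetilde{f'},\widetilde{g'})$ is exactly the weight-$D$ component of $P(f',g')$, where $D=w_{q,p}(P)$; thus I may write $P(f',g')=\overline P(\widetilde{f'},\widetilde{g'})+R$ with $w_{1,p}(R)<D$.

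For the degree bound I would pass to the $s$-degree decomposition $L:=\overline P(\widetilde{f'},\widetilde{g'})=\sum_jL^{(j)}$. Writing $d_h=\deg h$ and using $p\,d_h=n$, one computes that $L^{(j)}$ has homogeneous degree $d_hD-j(n-\deg s)$; hence the greatest monomial $M=v(L)$, which realises $\deg L$, occurs in a component $L^{(j_0)}$ of least $s$-degree $j_0$ with $L^{(j_0)}\ne0$, and $M$ is a reduced word in $h$ and $v(s)$ carrying exactly $j_0$ letters $v(s)$. Because $\widetilde{f'},\widetilde{g'}$ use only the top powers $h^q,h^p$ together with $s$, the terms of $R$ obtained by keeping $\deg_s=j_0$ copies of $s$ have strictly smaller homogeneous degree than $\deg M$; the plan is therefore to show that $M$ cannot be produced by $R$ at all, so that its coefficient in $P(f',g')$ equals the non-zero one it has in $L$. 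Then $M\in\mathrm{supp}(P(f',g'))$, and since the homogeneous degree of any element equals that of its leading monomial, this gives $\deg(P(f',g'))=\deg\bigl(v(P(f',g'))\bigr)\ge\deg M=\deg\bigl(\overline P(\widetilde{f'},\widetilde{g'})\bigr)$.

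The delicate point—and the step I expect to be the main obstacle—is precisely this last non-cancellation, and it is genuinely at issue only when the all-$h$ part $L^{(0)}=\overline P(c(f)h^q,c(g)h^p)$ vanishes, i.e.\ when $\sum c_{ab}c(f)^ac(g)^b=0$. This is exactly the phenomenon the hypotheses are designed to produce: the leading monomials $v(\widetilde{f'})=h^q$ and $v(\widetilde{g'})=h^p$ are algebraically dependent, so the naive top contribution collapses and $\deg L$ drops below the generic value $w_{m,n}(P)=d_hD$, forcing $M$ to involve $v(s)$. One must then rule out that a product using fewer abstract copies of $s$ reconstitutes the $j_0$ letters $v(s)$ of $M$ through the lower-order support of $s$ inside the free group $F$; a degree count shows that the contributions with $\deg_s=k\ge j_0$ are harmless (their degree falls short of $\deg M$ by at least $d_h$), so that only the terms with $k<j_0$ are dangerous, and controlling these is where the algebraic independence of $h$ and $v(s)$ in the ordered group must be used decisively.
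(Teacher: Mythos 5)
Your setup is sound: the freeness of $C$, the two gradings $w_{1,p}$ and $\deg_s$, and the identification of $\overline P(\widetilde{f'},\widetilde{g'})$ with the $w_{1,p}$-homogeneous component of weight $D$ of $P(f',g')$ all match the paper's framework, and your proof of the nonvanishing (algebraic independence of $\widetilde{f'}$ and $\widetilde{g'}$ by downward induction on $\deg_s$) is correct and in fact more detailed than what the paper writes. But the degree inequality, which is the substance of the lemma, has a genuine gap, and you concede it yourself: you never prove the final non-cancellation step, namely that $M=v(L)$ survives in $P(f',g')$, and you correctly observe that words of $R$ with fewer than $j_0$ occurrences of $s$ can have homogeneous degree exceeding $\deg M$ and might conceivably reproduce the group element $M$ from lower-order monomials of $s$. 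Your plan stalls exactly there because you set out to prove a statement stronger than the lemma needs: whether $M\in\mathrm{supp}(P(f',g'))$ is beside the point, since a word of $R$ of degree $\geq\deg M$ does not threaten the inequality $\deg(P(f',g'))\geq\deg L$ --- it only helps it.

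The paper's argument avoids this trap. The key fact (this is precisely what the hypothesis ``$h$ and $v(s)$ are algebraically independent'' buys) is: for \emph{any} element $E$ of $C$ written as a combination $\sum_W c_W\,W(h,s)$ of words in $h^{\pm1}$ and $s$, one has $\deg E = w_{\deg h,\deg s}$ of that word expression. Indeed, $\leq$ is clear; for $\geq$, among the words of maximal $w_{\deg h,\deg s}$-weight pick the one, $W_0$, whose leading monomial $W_0(h,v(s))$ is greatest in the group order --- these group elements are pairwise distinct exactly because $h$ and $v(s)$ are algebraically independent --- and note that every other group element occurring anywhere in the expansion is either of strictly smaller degree (it comes from a word of smaller weight) or strictly smaller in the group order (it is a non-leading monomial, or the leading monomial of another maximal-weight word), so nothing can cancel $W_0(h,v(s))$. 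Now apply this fact simultaneously to $Q=P(f',g')$ and to its graded component $L=\overline P(\widetilde{f'},\widetilde{g'})$: since the words of $L$ form a subset of the words of $Q$ with the same coefficients, $\deg Q = w_{\deg h,\deg s}(Q)\geq w_{\deg h,\deg s}(L)=\deg L$, and you are done. Your ``dangerous'' words with $k<j_0$ are handled automatically because the maximum in the key fact ranges over \emph{all} words of $Q$, not just those of $L$. So the repair is structural: run the leading-monomial survival argument once on the whole word expression of $Q$, rather than on $L$ alone followed by an attempt to shield $v(L)$ from $R$.
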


\begin{proof}





\noindent Consider $P(f',g')=Q(h,h^{-1},s)$ as well as $\overline{P}(\widetilde{f'},\widetilde{g'})=R(h,h^{-1},s)$ as the element of $C$, and then $R$ is the leading part of $Q$ relative to $w_{1,p}$, so all the monomials of $R$ appear in $Q$ with nonzero coefficients. Since $h$ and $v(s)$ are algebraically independent, $\deg(P(f',g'))=w_{\deg(h), \deg(s)}(Q(h,h^{-1},s))$ and $\deg(\overline{P}(\widetilde{f'},\widetilde{g'}))=w_{deg(h),deg(s)}(R(h,h^{-1},s))$. We conclude by the definition of weighted degree.
\end{proof}


\noindent Now we only need to estimate $\deg(\overline{P}(\widetilde{f'},\widetilde{g'}))$.

\

\noindent The following procedure is similar to the counterparts in \cite{MLY}.

\

\noindent Now we can write $\widetilde{f}=t^m$ and $\widetilde{g}=t^n+s$ just for convenience since $\deg(f)=m$ and $\deg(g)=n$. Then $\deg(t)=1$ and to each polynomial $m(x,y)$, $deg(m(t,s))=\deg_{1,\deg(s)}(m(x,y))$, or we can say that $v(t)$ and $v(s)$ are algebraically independent over $K$.

\

\noindent Let $N=w_{m,n}(\overline{P}(x,y))$, and $q$ be the greatest integer among the integers which are not greater than $\frac{N}{m+n}$(or we can denote it by $q=[\frac{N}{m+n}]$).  Define $Q(t,s)=\overline{P}(t^m,t^n+s)$, and we have

\begin{lemma}[\textbf{on\ monomials}]

There is a monomial $u(t,s)$ in $supp(Q)$ such that $\deg_s(u)\leq q$.
\end{lemma}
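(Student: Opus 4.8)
The plan is to argue entirely inside the free algebra $K\langle t,s\rangle$, which is legitimate here: since $v(t)$ and $v(s)$ are algebraically independent, $t$ and $s$ generate a free subalgebra and $\deg$ of a monomial reads off $w_{1,\deg(s)}$ term by term, so $\mathrm{supp}(Q)$ is an honest set of words in $t,s$. First I would record the homogeneity that drives everything: giving $t$ the weight $1$ and $s$ the weight $n$ makes $x\mapsto t^m$ and $y\mapsto t^n+s$ weight-homogeneous of weights $m$ and $n$, so $Q=\overline P(t^m,t^n+s)$ is weight-homogeneous of weight $N$. Hence every monomial $u\in\mathrm{supp}(Q)$ with $\deg_s(u)=k$ automatically has $\deg_t(u)=N-nk$; smaller $s$-degree means larger $t$-degree, and the claim is precisely that the \emph{minimal} $s$-degree occurring in $Q$ is at most $q$.

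Next I would expand coefficients combinatorially. Write $\overline P=\sum_W c_W\,W$ over words $W$ in $x,y$ with $m\deg_x(W)+n\deg_y(W)=N$, and group $\overline P$ into its bihomogeneous parts $\overline P^{(b)}$ according to $b=\deg_y(W)$; since $N$ is fixed this forces $\deg_x(W)=(N-nb)/m$, so the admissible $b$ are the integers with $0\le nb\le N$ and $m\mid(N-nb)$, and consecutive admissible values differ by $m/\gcd(m,n)$. A monomial $t^{d_0}s\,t^{d_1}\cdots s\,t^{d_k}$ of $Q$ arises from a word $W$ by choosing $k$ of its $y$-slots to become $s$ and sending everything else to powers of $t$, and its coefficient is the sum of the $c_W$ over all such $(W,\text{choice})$ producing it. The crucial numerical input I would extract is that, because $\deg(f)\nmid\deg(g)$ and $\deg(g)\nmid\deg(f)$, writing $m=gm'$, $n=gn'$ with $\gcd(m',n')=1$ gives $m',n'\ge 2$, whence $(m'-1)(n'-1)\ge 1$ and $\mathrm{lcm}(m,n)=gm'n'\ge g(m'+n')=m+n$. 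Consequently the number $r$ of distinct $y$-degrees occurring in $\overline P$ satisfies $r\le\lfloor N/\mathrm{lcm}(m,n)\rfloor+1\le\lfloor N/(m+n)\rfloor+1=q+1$.

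The heart of the proof is to show that the parts of $Q$ of $s$-degree $0,1,\dots,q$ cannot all vanish. Here I would track, for chosen placements, the coefficient of an explicit low-$s$-degree monomial; for instance the coefficient of $s^{j}t^{N-nj}$ equals $\sum_W c_W$ over exactly the words $W$ beginning with $y^{j}$ (the selected $y$'s being forced to the first $j$ slots), with symmetric and interior choices covering the cases where no word begins, or ends, with $y$. Assembling such coefficients for $j=0,1,\dots,q$ yields a linear system in the $r$ part-coefficients of $\overline P$ indexed by the distinct $y$-degrees $b^{(1)}<\cdots<b^{(r)}$; since $r\le q+1$, non-singularity of that system forces $\overline P=0$, a contradiction, so some monomial of $s$-degree $\le q$ must survive.

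The hard part is exactly this non-vanishing in arbitrary characteristic. Over characteristic $0$ one may abelianize, reducing to the statement that the lacunary polynomial $\sum_i c_i\,w^{b^{(i)}}$ cannot vanish at $w=1$ to order $\ge r$ (Descartes' rule / a Vandermonde determinant in the $b^{(i)}$), which is the commutative estimate underlying \cite{MLY}. In characteristic $r$ this collapses: the binomial coefficients $\binom{b^{(i)}}{j}$ controlling the abelianized system can vanish and the Vandermonde degenerates — precisely the point where Bergman's Lemma on radicals and the method of \cite{MLY} fail. The resolution I would pursue is to keep the computation genuinely non-commutative: distinct positions of the inserted $s$'s give \emph{distinct} words of $K\langle t,s\rangle$, each carrying the bare sum $\sum c_W$ with no binomial factor, so the relevant incidence matrix is $0/1$ and becomes triangular once the words are ordered by the number of leading (or trailing) $y$'s, hence stays non-singular in every characteristic. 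Verifying that for every admissible $\overline P$ one can locate such a separating position pattern using at most $q$ insertions is the remaining technical core, and the main obstacle to making the sketch complete.
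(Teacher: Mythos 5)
Your preliminary observations are fine (the weight-homogeneity of $Q$ under $w_{1,n}$, and the bound $r\le q+1$ on the number of admissible $y$-degrees via $\mathrm{lcm}(m,n)\ge m+n$), but the central step of your plan fails. You propose to test only the monomials $s^jt^{N-nj}$, $j=0,\dots,q$ (all $s$'s pushed to the front), and to argue that vanishing of all their coefficients forces $\overline P=0$ via a non-singular linear system in the $r\le q+1$ part-coefficients indexed by $y$-degree. In the noncommutative setting no such system exists: the coefficient of $s^jt^{N-nj}$ in $Q$ equals $\sum c_W$ over exactly those words $W\in\mathrm{supp}(\overline P)$ beginning with $y^j$, so it depends on positional data inside the words, not on their $y$-degrees; the true unknowns are the individual word coefficients, of which there are far more than $q+1$. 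Concretely, take $\overline P=xyx-x^2y$, which is $w_{m,n}$-homogeneous with $N=2m+n$, hence $q=1$. The coefficient of $t^{2m+n}$ (case $j=0$) is $1-1=0$, and the coefficient of $s\,t^{2m}$ (case $j=1$) is $0$ because no word of $\overline P$ begins with $y$. All of your witness coefficients vanish although $\overline P\neq 0$; the lemma still holds here, but only via the monomials $t^mst^m$ and $t^{2m}s$, which your scheme never inspects. So the required non-vanishing cannot be localized at the words $s^jt^{N-nj}$, and the "separating position pattern" you defer at the end is not a technical remainder — it is the entire content of the lemma.

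For comparison, the paper never sets up a linear system; it constructs one explicit surviving monomial. Choose $z\in\mathrm{supp}(\overline P)$ of maximal $y$-degree $J$, and among those the lexicographically greatest with $x\gg y$, and write $z=x^{\alpha_1}y^{\beta_1}\cdots x^{\alpha_k}y^{\beta_k}$, $I=\deg_x(z)$. If $J\le q$ any expansion monomial of $z$ works; otherwise $J\ge q+1$ together with $N=mI+nJ$ forces $I+J\le 2q+1$. One then replaces each block $y^{\beta_i}$ not by $s^{\beta_i}$ but by the alternating word $(st^n)^{\sigma_i}$ or $(st^n)^{\sigma_i}s$ (according to $\beta_i=2\sigma_i$ or $2\sigma_i+1$), so only about half of each $y$-block contributes $s$'s, giving $\deg_s(u)\le (J+k)/2\le (I+J+1)/2\le q+1$. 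The double maximality of $z$ is then used to show that no other word of $\mathrm{supp}(\overline P)$ can produce the same monomial $u$ under expansion (a parsing-uniqueness argument exploiting that neither of $m,n$ divides the other), so the coefficient of $u$ survives in every characteristic, with a final adjustment in the boundary case $\deg_s(u)=q+1$. This explicit alternating substitution inside a doubly maximal word, plus uniqueness of parsing, is precisely the mechanism your sketch is missing.
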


\begin{proof}
Choose a monomial $z(x,y)$ in $\text{supp}(\overline{P}(x,y))$ such that

\noindent (1) $\deg_y(z)$ is the greatest;

\noindent (2) Among all the monomials whose degree related to $y$ is equal to $\deg_y(z)$, $z$ is the greatest under the lexicographic order $x>>y$.

\

\noindent Let $z(x,y)=x^{\alpha_1}y^{\beta_1}\cdots x^{\alpha_k}y^{\beta_k}$ with $\alpha_1,\beta_k\geq 0$ and $\alpha_i\geq 1$, $2\leq i\leq k$, $\beta_j\geq 1$, $1\leq j\leq k-1$. Let $I=\deg_x(z)$ and $J=\deg_y(z)$. If $J\leq q$, then the degrees related to $s$ of all the monomials in $\text{supp}(Q)$ are not greater than $q$, and since in Proposition 2.4
 it is proved that $\text{supp}(Q)$ is not empty, we prove the lemma. Hence assume $J>q$, or $J\geq q+1$. If $I+J\geq 2q+2$, then since $N=mI+nJ$, $N=m(I+J)+(n-m)J\geq m(2q+2)+(n-m)(q+1)=(m+n)(q+1)$ which contradicts to $\frac{N}{m+n}<q+1$, and hence $I+J\leq 2q+1$.

\

\noindent Now for $z(x,y)=x^{\alpha_1}y^{\beta_1}\cdots x^{\alpha_k}y^{\beta_k}$, replace $x$ by $t^m$, and, if $\beta_i=2\sigma_i$, replace $y^{\beta_i}$ by $(st^n)^{\sigma_i}$; if $\beta_i=2\sigma_i+1$, replace $y^{\beta_i}$ by $(st^n)^{\sigma_i}s$. Then we get a monomial $u(t,s)$. It is easy to verify that $u(t,s)$ is a monomial in the extension of $z(t^m,t^n+s)=t^{m\alpha_1}(t^n+s)^{\beta_1}\cdots t^{m\alpha_k}(t^n+s)^{\beta_k}$, and the coefficient of $u$ is just the coefficient of $z$ in $\text{supp}(\overline{P})$ and hence nonzero.

\

\noindent Now we are going to prove that $u(t,s)$ cannot come from other extensions of the monomials in $\text{supp}(\overline P)$ after replacement.

\

\noindent We divide $z(x,y)$ into different parts first: $x^{\alpha_1}$; $y^{\beta_i}x^{\alpha_{i+1}}$ where $1\leq i\leq k-1$; $y^{\beta_k}$. Let $l(x,y)$ be a part of $z(x,y)$, and we define $\psi(l(x,y))$ being the corresponding part in $u(s,t)$ after replacement. So $\psi(x^{\alpha_1})=t^{m\alpha_1}$ and so on. If $u(s,t)$ is also in the extension of $z_1(t^m, t^n+s)$ where $z_1(x,y)\in supp(\overline{P}(x,y))$, then let $l_1(x,y)$ be a part of $z_1(x,y)$, and we define $\psi_1(l(x,y))$ to be the corresponding part in $u(s,t)$. Hence $z_1(x,y)$ can also be divided in to $\prod_{i+1^{k+1}}h_i(x,y)$ with $\psi_1(h_1)=\psi(x^{\alpha_1})$, $\psi_1(h_i)=\psi(y^{\beta_i}x^{\alpha_{i+1}})$ where $1\leq i\leq k-1$, and $\psi_1(h_{k+1})=\psi(y^{\beta_k})$. Obviously $deg_y(h_1)\geq deg_y(x^{\alpha_1})$. To each $i$, $1\leq i\leq k-1$, if $\beta_i$ is odd, then $\psi_1(h_{i+1})=(st^n)^{\sigma_i}s\cdot t^{m\alpha_{i+1}}$, and since $n<m$, the $t^n$ between two $s$ has to come from $\widetilde{g}$, so $h_{i+1}=y^{\beta_i}\cdot h'_{i+1)}$ where $\psi_1(h'_{i+1})=t^{m\alpha_{i+1}}$, namely $deg_y(h_{i+1})\geq\beta_i$. If $\beta_i$ is even, then $\psi_{h_{i+1}}=(st^n)^{\sigma}\cdot t^{m\alpha_{i+1}}=(st^n)^{\sigma-1}s\cdot t^{m\alpha_{i+1}+n}$. Hence $h_{i+1}=y^{\beta_i-1}h'_{i+1}$ where $\psi_1(h'_{i+1})=t^{m\alpha_{i+1}+n}$. However, since $n<m$, $h'_{i+1}$ cannot be of the form $x^p$ for some integer $p$, and hence $\deg_y(h'_{i+1})\geq 1$, namely $\deg_y(h_{i+1})\geq \beta_i$. To $h_{k+1}$, since $\psi_1(h_{k+1})=\psi_{y^{\beta_k}}=st^nst^n\cdots s$ or $st^nst^n\cdots st^n$, it has to equal to $y^{\beta_k}$.
Hence, $\deg_y(z_1(x,y))=\sum_{i=1}^{k+1}deg_y(h_i)\geq\sum_{i=1}^k\beta_i=deg_y(z(x,y))$. However $\deg_y(z(x,y))$ is the greatest one among the monomials in $supp((\overline P))$, $deg_y(z_1(x,y))=deg_y(z(x,y))$, and the only case is that $h_1(x,y)=x^{\alpha_1}$, and for $1\leq i\leq k-1$, $h'_{i+1}=x^{\alpha_{i+1}}$ if $\beta_i$ is odd and $\deg(h'_{i+1})=1$ if $\beta_i$ is even. Let $h'_{j+1}$ be the monomial with least $j$ such that $\beta_j$ is even but $h'_{j+1}\not=yx^{\alpha_{j+1}}$, then since $\deg_y(h'_{j+1})=1$, $h'_{j+1}=x^ryx^{\alpha_{j+1}-r}$ for $1\leq r\leq \alpha_{j+1}$. But if so, $z_1(x,y)>z(x,y)$ under the lexicographic order $x>>y$ which contradicts to $z(x,y)$ being maximal, hence no such $h'_{j+1}$ exists, namely each $h'_{j+1}$ of this kind is equal to $yx^{\alpha_{j+1}}$. Hence $z_1(x,y)=z(x,y)$ and the coefficient of $u(s,t)$ is not zero.

\

\noindent According to the definition of $u(s,t)$, we see that
$$\deg_s(u)\leq \sum_{i=1}^k\frac{\beta_i+1}{2}=\frac{J+k}{2}.$$ Obviously that $I\geq k-1$, and hence $\frac{J+k}{2}\leq\frac{I+J+1}{2}\leq\frac{2q+2}{2}=q+1$(be reminded that $I+J\leq q+1$). Notice that $\deg_s(u)=q+1$ only if all the $\beta_i$s' are odd and $I=k-1$, and $z(x,y)=y^{2\sigma_1+1}xy^{2\sigma_2+1}\cdots xy^{2\sigma_k+1}$ or $y^{2\sigma_1+1}xy^{2\sigma_2+1}\cdots xy^{2\sigma_{k-1}+1}x$. Then in $z(t^m,t^n+s)$ we replace $y^{2\sigma_1+1}x$ by $(t^ns)^{\sigma_1}t^n\cdot t^m$ and choose $u(t,s)=(t^ns)^{\sigma_1}t^nt^m(st^n)^{\sigma_2}s\cdots$. We denote $z(x,y)=y^{2\sigma_1+1}x\cdot h(x,y)$ and if $u(s,t)$ can also come from another monomial $z_1(x,y)$, then $z_1(x,y)=y^{2\sigma_2}h_1(x,y)h(x,y)$ with $\psi_1(h_1)=t^{m+n}$. Hence $h_1(x,y)=xy$ or $yx$. Notice again that $z(x,y)$ is the maximal element under the lexicographic order $x>>y$, and hence $h_1(x,y)=yx$ which means $z_1(x,y)=z(x,y)$. Then the coefficient of $u(s,t)$ is nonzero and $\deg_s(u)=q+1-1=q$.
\end{proof}

\

\noindent {\bf Proof\ of\ Theorem\ 1.1.}
Recall that $\deg(f)=m$, $\deg(g)=n$, $\deg(t)=1$, $\deg(s)=\deg([f,g])-\deg(f)=\deg([f,g])-m$, $N=w_{m,n}(\overline{P}(x,y))$. We have proved that there exists some

\noindent $u(s,t)\in\text{supp}(\overline{P}(t^m,t^n+s))$ such that $\deg_s(u)\leq N/(m+n)$. Since $N=deg_t(u)+n\cdot\deg_s(u)$, then $\deg(u)=\deg_t(u(t,s))+\deg_s(u(t,s))\cdot(\deg([f,g])-m)=N+\deg_s(u(s,t))(\deg([f,g])-m-n)$.
\noindent Since $\deg([f,g])-m-n\leq 0$, we get

$$\deg(P(f,g))\geq \deg(\overline{P}(\widetilde{f},\widetilde{g}))\geq \deg(u)\geq N+\frac{N(\deg([f,g])-m-n)}{m+n}$$
$$=\frac{\deg([f,g])}{m+n}w_{m,n}(P).$$
Since $m+n=\deg(fg)$, we get
$$\deg(P(f,g))\geq \frac{\deg([f,g])}{\deg(fg)}w_{\deg(f), \deg(g)}(P).$$\qed

\begin{example}
Let $f=x^n$, $g=x^m+y$, $P=[x,y]^k$. Then
$$\deg(P(f,g))=k(n+1)=\frac{\deg([f,g])}{\deg(fg)}w_{\deg(f), \deg(g)}(P),$$
which shows the estimate is sharp.
\end{example}

\begin{remark}
The methodology in this paper, unlike that in \cite{MLY}, is not applicable for commutative case, as in that case there is no invariant  to judge whether two polynomials are algebraically dependent or independent over a field of positive characteristic, and in fact to find such an invariant is an interesting question,
and it is also interesting to get a sharp degree estimate for the commutative case for positive characteristic.
\end{remark}

\

\section{\bf Acknowledgements}

\noindent Jie-Tai Yu is grateful to Academia Sinica Taipei, Shanghai University, Tata Institute of Fundamental Research, and Osaka University for warm hospitality and stimulating atmosphere during his visit, when part of this work was done. The authors would like to thank I-Chiau Huang for providing \cite{B1, B2} and helpful discussion,
and to thank Alexei Belov, Vesselin Drensky and Leonid Makar-Limanov for useful comments and suggestions.

\

\end{document}